  \newtheorem{theorem}{Theorem}
 \newtheorem{definition}{Definition}
  \newtheorem{corollary}{Corollary}
    \newtheorem{proposition}{Proposition}
\begin{document}

\title{Highly singular (frequentially sparse) steady solutions for the 2D Navier--Stokes equations on the torus}
\author{Pierre Gilles Lemari\'e-Rieusset\footnote{LaMME, Univ Evry, CNRS, Universit\'e Paris-Saclay, 91025, Evry, France; e-mail : pierregilles.lemarierieusset@univ-evry.fr}}
\date{}\maketitle

\begin{abstract} We construct non-trivial steady solutions in $H^{-1}$ for the 2D Navier--Stokes equations on the torus. In particular, the solutions are not square integrable, so that we have to redefine the notion of solutions.
\end{abstract}

\noindent{\bf Keywords : }   Navier--Stokes equations, steady solutions, lacunary Fourier series, nonuniqueness, Koch and Tataru theorem.\\

\noindent{\bf AMS classification : } 35K55, 35Q30, 76D05.\\

\section*{Introduction}
In this paper, we are looking for steady solutions $\vec u$ of the 2D Navier--Stokes equations on the torus $\mathbb{T}^d=\mathbb{R}^2/ 2\pi\mathbb{Z}^2$, i.e.  for solutions of the equations
\begin{equation}\label{steady}\left\{ \begin{split}  \Delta\vec u -\mathbb{P} (\vec u\cdot \vec\nabla\vec u)=0\\ \textrm{ div } \vec u=0 
\end{split}\right.\end{equation} where $\vec u$ is a periodical distribution vector field, with mean value $0$:\[\int_{\mathbb{T}^2} \vec u(x)\, dx=0.\]

Such a periodical distribution vector field $\vec u$ can be written as a Fourier series
$$ \vec u(x)=\sum_{k\in\mathbb{Z}^2_+} \cos(k\cdot x)\vec v_k + \sin(k\cdot x) \vec w_k$$  where $k\in \mathbb{Z}^2_+$ if and only if $k=(k_1,k_2)\in \mathbb{Z}^2\setminus\{(0,0)\}$ and $\arg(k_1 +\mathrm{i} k_2)\in (-\pi/2,\pi/2]$.  The convergence in $\mathcal{D}'$ is given by a slow growth of the coefficients: 
$$ \vert \vec v_k\vert +\vert \vec w_k\vert \leq C \vert k\vert^N$$ for some constants $C$ and $N$; in particular, $\vec u\in H^s(\mathbb{T}^2)$ for $s<-N-1$.\\ If $k\in \mathbb{Z}^2$, $ k\neq (0,0)$ and $k\notin \mathbb{Z}^2_+$, then we may replace $k$ with $-k$ (with $-k\in \mathbb{Z}^2_+$) and write $\cos(k\cdot x)\vec v_k + \sin(k\cdot x) \vec w_k=\cos((-k)\cdot x)\vec v_k +\sin((-k)\cdot x)(- \vec w_k)$, hence the condition $k\in \mathbb{Z}^2_+$ is not essential.

 In our equations, $\mathbb{P}$ is the Leray projection operator on solenoidal vector fields, defined by
\begin{equation}\mathbb{P}(\sum_{k\in\mathbb{Z}^2_+} \cos(k\cdot x)\vec v_k + \sin(k\cdot x) \vec w_k)=\sum_{k\in\mathbb{Z}^2_+} \rho_k \cos(k\cdot x+\theta_k) k^\perp\end{equation} with $$  \rho_k \cos(k\cdot x+\theta_k)=  \cos(k\cdot x) \frac{\vec v_k\cdot k^\perp}{\vert k\vert^2}   + \sin(k\cdot x) \frac{\vec w_k\cdot k^\perp}{\vert k\vert^2} $$ where $$(k_1,k_2)^\perp=(-k_2,k_1), \vert (k_1,k_2)\vert^2=k_1^2+k_2^2 \textrm{ and } \rho_k=\frac{\sqrt{(\vec v_k\cdot k^\perp)^2+(\vec w_k\cdot k^\perp)^2} }{\vert k\vert^2}.$$
 
 It is easy to check that,   if the solution $\vec u$ satisfies $\vec u\in H^1(\mathbb{T}^2)$, then $\vec u=0$. Thus, we shall not require (weak) diffentiability for $\vec u$. Usually, it is customary to rewrite
  $\vec u\cdot\vec\nabla \vec u$ in the equations as  $\partial_1(u_1\vec u)+\partial_2(u_2\vec u)$  (since ${\rm div }\vec u=0$), where the derivatives are taken in the sense of distributions. In order to define $ u_i\vec u$, one then usually requires that $\vec u$ be square integrable. 
  
  As we whall see it, it is easy to check that, if the solution $\vec u$ satisfies $\vec u\in L^p(\mathbb{T}^2)$ for some $p>2$, then $\vec u=0$. This is even true when $\vec u$ belongs to the Lorentz space  $L^{2,1}(\mathbb{T}^2)$. This is still probably the case for $\vec u\in L^2(\mathbb{T}^2)$, thus we will search for some solution $\vec u$ which is not square integrable. We need however to be able to define $\mathbb{P}(\vec u\cdot\vec\nabla \vec u)$ when $\vec u$ is no longer square integrable.
  
\begin{definition}[Admissible vector fields] A divergence free periodical distribution vector field is admissible for the steady problem on $\mathbb{T}^2$ if it is of   the form
\begin{equation}\vec u=\sum_{k\in\mathbb{Z}^2_+} \rho_k \cos(k\cdot x+\theta_k) k^\perp\end{equation} with
\begin{equation} \!   \! \!  \! \!  \! \sum_{k_1\in\mathbb{Z}^2_+} \!  \sum_{k_2\in\mathbb{Z}^2_+}  \!  \rho_{k_1}\rho_{k_2}   \|\mathbb{P}\left(\cos(k_1 \! \cdot  \! x+\theta_{k_1})\sin (k_2 \! \cdot  \! x+\theta_{k_2}) (k_1^\perp \! \cdot  \! k_2) k_2^\perp\right)\|_{H^{-N}} \! < \! +\infty \end{equation}
for some $N$.
\end{definition}

If $\vec u= \sum_{k\in\mathbb{Z}^2_+} \rho_k \cos(k\cdot x+\theta_k) k^\perp=\sum_{k\in\mathbb{Z}^2_+} \vec u_k$ is an admissible vector field, we will then define $\mathbb{P}(\vec u\cdot\vec\nabla \vec u)$ as
\begin{equation}  \mathbb{P}(\vec u\cdot\vec\nabla \vec u)=\sum_{k_1\in\mathbb{Z}^2_+} \sum_{k_2\in\mathbb{Z}^2_+}   \mathbb{P}(\vec u_{k_1}\cdot\vec\nabla \vec u_{k_2}).\end{equation}
The main result in this paper is then the following one:

\begin{theorem} \label{main} There exists non-trivial solutions to the equations  \begin{equation}\left\{ \begin{split}  \Delta\vec u -\mathbb{P} (\vec u\cdot \vec\nabla\vec u)=0\\ \mathrm{ div } \vec u=0 
\end{split}\right.\end{equation} where $\vec u$ is an admissible vector field (with mean value $0$) with $\vec u\in H^{-1}(\mathbb{T}^2)\cap BMO^{-1}$.
\end{theorem}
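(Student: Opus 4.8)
The plan is to recast the steady system as a fixed-point equation and to produce a non-trivial solution by perturbing around an explicit, highly singular ``skeleton'' that is only an approximate solution. On admissible fields the bilinear map $B(\vec f,\vec g):=-(-\Delta)^{-1}\mathbb{P}(\vec f\cdot\vec\nabla\vec g)$ is well defined term by term, and the steady equations are equivalent to $\vec u=B(\vec u,\vec u)$. A naive contraction near $0$ produces only $\vec u=0$ — in keeping with the uniqueness of small solutions in critical spaces (the Koch--Tataru theorem, and the $L^{2,1}\Rightarrow\vec u=0$ remark above) — so the iteration must be started at a non-small approximate solution.

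The first and main step is to build a divergence-free $\vec u_0=\sum_{k\in S}\rho_k\cos(k\cdot x+\theta_k)k^\perp$ supported on a sparse (density-zero) set $S\subset\mathbb{Z}^2_+$ carrying exactly the additive structure the nonlinearity demands. The key algebraic fact is that the symmetrized interaction $\mathbb{P}\big(\vec u_k\cdot\vec\nabla\vec u_{k'}+\vec u_{k'}\cdot\vec\nabla\vec u_k\big)$ vanishes unless $k\nparallel k'$ and $|k|\neq|k'|$, its Fourier coefficients at the two frequencies $k\pm k'$ being proportional to $(|k'|^2-|k|^2)(k^\perp\cdot k')$. One then wants $S$ such that: consecutive modes are non-parallel; the resonant combinations $k\pm k'$ falling back in $S$ recombine, after summation over all pairs, to exactly $\Delta\vec u_0$; and the remaining ``off-$S$'' combinations carry only small (quadratic) coefficients. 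A concrete template is a union of a few arithmetic progressions with a common step $p$ (one through the origin): differences then systematically re-enter $S$, consecutive modes are automatically non-parallel, and the set stays sparse — while the resonance requirement becomes an infinite, essentially bilinear (convolution-type) system for $(\rho_k,\theta_k)$, to be solved by a contraction in a weighted sequence space. The amplitudes are tuned so that $\sum_k\rho_k^2|k|^2=+\infty$ while $\sum_k\rho_k^2<+\infty$; this both places $\vec u_0$ in $H^{-1}\cap BMO^{-1}$ but not in $L^2$ and is exactly what makes the formal energy identity (which would force $\vec u_0=0$) inapplicable.

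With the skeleton in hand, put $\vec b:=\mathbb{P}(\vec u_0\cdot\vec\nabla\vec u_0)-\Delta\vec u_0$; by construction $\vec b$ is supported off $S$ with coefficients quadratic in the $\rho$'s. Rescaling the overall amplitude by a small $\varepsilon$ gives $\|\vec u_0\|_X\lesssim\varepsilon$ and $\|(-\Delta)^{-1}\vec b\|_X\lesssim\varepsilon^2$ in a suitable solution space $X$ — a $BMO^{-1}$-type (Carleson-measure) space, with $X\hookrightarrow H^{-1}\cap BMO^{-1}$ checked by routine lacunary/Carleson estimates. Writing $\vec u=\vec u_0+\vec w$ reduces the equation to $(I-L)\vec w=-(-\Delta)^{-1}\vec b+B(\vec w,\vec w)$ with $L\vec w:=B(\vec u_0,\vec w)+B(\vec w,\vec u_0)$. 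The analytic engine is the torus version of the Koch--Tataru bilinear estimate: since all fields have mean zero one may write $(-\Delta)^{-1}=\int_0^{+\infty}e^{t\Delta}\,dt$, which exhibits $B$ as the stationary analogue of the Koch--Tataru bilinear operator and yields $B:X\times X\to X$ bounded; hence $\|L\|_{X\to X}\lesssim\varepsilon<1$, and the contraction mapping theorem gives a unique small $\vec w$ with $\|\vec w\|_X\lesssim\varepsilon^2$. Then $\vec u=\vec u_0+\vec w$ solves $\vec u=B(\vec u,\vec u)$, it is non-trivial since $\|\vec u\|_X\geq\varepsilon-C\varepsilon^2>0$, it lies in $H^{-1}\cap BMO^{-1}$, and it is still admissible because $\vec w$ inherits the rapid off-$S$ decay of $\vec b$, so the defining double sum for $\mathbb{P}(\vec u\cdot\vec\nabla\vec u)$ converges.

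I expect the genuine obstacle to be entirely in the skeleton construction: finding $S$ simultaneously sparse and additively rich enough that the resonant part of the nonlinearity regenerates $\Delta\vec u_0$ with only lower-order leakage, and proving that the resulting infinite amplitude/phase system has a solution whose coefficients decay slowly enough (of size roughly $|k|^{-1}$, up to logarithms) to land in $H^{-1}\setminus L^2$ yet fast enough for $BMO^{-1}$ and admissibility. Everything afterwards — the perturbative closure and the function-space bookkeeping — should be a standard package resting on the Koch--Tataru estimates.
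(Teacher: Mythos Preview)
Your proposal has a scaling inconsistency that makes the perturbative closure collapse to the trivial solution. You want a skeleton with $\|\vec u_0\|_X\sim\varepsilon$ and residual $\vec b=\mathbb{P}(\vec u_0\cdot\vec\nabla\vec u_0)-\Delta\vec u_0$ satisfying $\|(-\Delta)^{-1}\vec b\|_X\lesssim\varepsilon^2$. But $(-\Delta)^{-1}\vec b=\vec u_0-B(\vec u_0,\vec u_0)$, and by the very bilinear estimate you invoke, $\|B(\vec u_0,\vec u_0)\|_X\lesssim\varepsilon^2$; hence $\|(-\Delta)^{-1}\vec b\|_X\geq\|\vec u_0\|_X-C\varepsilon^2\sim\varepsilon$, not $\varepsilon^2$. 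Said differently: solving your resonance system on $S$ (on-$S$ part of the nonlinearity $=\Delta\vec u_0$) balances a linear term against a quadratic one and therefore \emph{fixes} the amplitude scale; rescaling $\vec u_0\mapsto\varepsilon\vec u_0$ afterwards destroys the resonance and leaves an on-$S$ residual of order $\varepsilon$. With forcing of order $\varepsilon$ the fixed point gives $\|\vec w\|_X\sim\varepsilon$, and nothing prevents $\vec u_0+\vec w=0$. In fact the same bounded bilinear map $B:X\times X\to X$ that you need for the contraction also gives uniqueness of small solutions of $\vec u=B(\vec u,\vec u)$ in $X$, and $0$ is one of them --- so any solution produced by a small-data contraction in such an $X$ must be trivial. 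This is exactly the obstruction you identified in your opening paragraph; the skeleton does not circumvent it.

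The paper takes a completely different, non-perturbative route: an explicit error-correcting iteration in the spirit of convex integration, with no fixed-point argument and no bilinear estimate in a function space. One starts from a single mode $\vec u_0$, and at each step $n\geq 1$ adds a \emph{pair} of modes $\vec v_n,\vec w_n$ at very high frequencies $k_n$ and $k_n+\omega_n$ with $\omega_n\perp k_n$ and $|k_n|=N_n|\omega_n|$, $N_n$ huge. The crucial mechanism is that the self-interaction of this pair, after the Leray projection, outputs a term at the \emph{low} frequency $\omega_n$ with coefficient $\sim\rho_n^2\,|k_n|\,|\omega_n|$: because $|k_n|/|\omega_n|=N_n$ is large, a tiny $\rho_n$ still produces an $O(1)$ low-frequency output. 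One enumerates the finitely many error frequencies left over after step $n-1$, sets $\omega_n$ equal to the $n$-th of them, and chooses $\rho_n,\eta_n$ so that this low-frequency output cancels that error exactly. The new errors all sit near $|k_n|$ and are cancelled at later steps; convergence in $H^{-1}$ and admissibility follow from direct bounds on the coefficients (choosing the $N_n$ to grow fast enough). The high-to-low frequency transfer with amplification is precisely what your arithmetic-progression skeleton lacks, and it is what allows building a non-trivial solution whose $BMO^{-1}$ norm is nonetheless small.
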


The paper is organized in the following manner: in Section 1, we recall classical results on steady solutions on $\mathbb{T}^d$; in section 2, we describe some examples of admissible vector fields; in section 3, we prove Theorem \ref{main}; in section 4, we comment on the Koch and Tataru theorem.

\section{Steady solutions for the Navier-Stokes problem on $\mathbb{T}^d$: known results.}

In this section, we recall known results on  steady solutions  for the  Navier--Stokes problem in $L^2(\mathbb{T}^d)$, for $ d\geq 2$.

\subsection*{Case $\vec u\in H^1(\mathbb{T}^d)\cap L^4(\mathbb{T}^d)$.}
If  $\vec u\in H^1(\mathbb{T}^d)\cap L^4(\mathbb{T}^d)$ (recall that, when $d\leq 4$, $H^1(\mathbb{T}^d)\subset L^4(\mathbb{T}^d)$), we can compute $$ -\int_{\mathbb{T}^d}\vert \vec \nabla\otimes\vec u\vert^2\, dx=\int_{\mathbb{T}^d} \vec u\cdot \mathbb{P} (\vec u\cdot \nabla\vec u)\, dx=\int_{\mathbb{T}^d} \vec u\cdot   (\vec u\cdot \nabla\vec u)\, dx=\int_{\mathbb{T}^d} \mathrm{div}\, (\frac{\vert \vec u\vert^2}2\vec u) \, dx=0.
$$ Thus $\vec u=0$ (we are interested in vector fields with null mean value).

\subsection*{Case $\vec u\in L^p(\mathbb{T}^d)$, $p>d$.}
As $p$ is subcritical when $p>d$, the initial value problem 
\begin{equation}\label{evolut}\left\{ \begin{split} &\partial_t\vec v=\Delta \vec v-\mathbb{P}(\vec v\cdot\nabla\vec v)\\ & {\rm div }\,\vec v=0\\ & \vec v(0,x)=\vec u_0(x)\end{split}\right.\end{equation} with $\vec u_0\in L^p$ has a unique solution in $\mathcal{C}([0,T),L^p)$ for some time $T$. This solution $\vec v$ is smooth on $(0,T)\times\mathbb{T}^d$.

If $\vec u$ is a steady solution of (\ref{steady}), then $\vec v(t,x)=\vec u(x)$ defines a solution of the Cauchy problem (\ref{evolut}) with initial value $\vec u_0=\vec u$. Hence, if $\vec u\in L^p$, we find that $\vec u\in H^1\cap L^\infty$ and finally $\vec u=0$.

\subsection*{Case $\vec u\in L^d(\mathbb{T}^d)$, $d\geq 3$.}
The value $p=d$ is critical for the Cauchy problem (\ref{evolut}). When $\vec u_0\in L^d$, the problem has  a   solution $\vec v$  in $\mathcal{C}([0,T),L^d)$ for some time $T$ and this solution $\vec v$ is smooth on $(0,T)\times\mathbb{T}^d$. But uniqueness of solutions in $\mathcal{C}([0,T),L^d)$ is known only for $d\geq 3$. Thus, if $\vec u$ is a steady solution of (\ref{steady}) with $\vec u\in L^d(\mathbb{T}^d)$ and if $d\geq 3$, then $\vec u=0$.

\subsection*{Case $\vec u\in L^2(\mathbb{T}^d)$, $d\geq 4$.}

Recently, Luo \cite{LUO} constructed non trivial steady solutions in  $L^2(\mathbb{T}^d)$, $d\geq 4$ (this solution belongs to $L^p(\mathbb{T}^d)$ for some $p\in (2,d)$). His proof was following the scheme of convex integration developed by  De Lellis and Sz\'ekelyhidi \cite{DEL} in the case of non-steady solutions for the Euler equations  and by Buckmaster and Vicol \cite{BUC}  in the case of non-steady solutions for the  Navier--Stokes equations. However, his proof requires the spatial dimension $d$ to be no lesser than $4$.

\subsection*{Case $\vec u\in L^2(\mathbb{T}^2)$.}

 Uniqueness of solutions of    the Cauchy problem (\ref{evolut}) in $\mathcal{C}([0,T),L^2(\mathbb{T}^2))$    is not known. Proofs of uniqueness in $\mathcal{C}([0,T),L^3(\mathbb{T}^3))$ are based on maximal regularity properties which are no longer true in the 2D case \cite{FURa, LIO, MEY, MON, FURb, MAY}. On the other hand, the proof of non-uniqueness in  $\mathcal{C}([0,T),L^2(\mathbb{T}^3))$ is based on convex integration methods which cannot be applied  in the 2D case \cite{BUC}.
 
 However, we have uniqueness    in $\mathcal{C}([0,T),L^{2,1}(\mathbb{T}^2))$, where $L^{2,1}$ is a Lorentz space:
 
 \begin{proposition}$\ $\\ If $\vec v_1$ and $\vec v_2$ are two solutions of   the Cauchy problem (\ref{evolut}) with $\vec v_1, \vec v_2\in \mathcal{C}([0,T),L^{2,1}(\mathbb{T}^2))$ and $\vec v_1(0,.)=\vec v_2(0,.)=\vec u_0$, then $\vec v_1=\vec v_2$. 
 \end{proposition}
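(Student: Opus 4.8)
The plan is to adapt to the critical two–dimensional setting the uniqueness scheme known for mild solutions in $\mathcal C([0,T),L^3(\mathbb R^3))$ (cf.\ the references above), the point being that $L^{2,1}(\mathbb{T}^2)$ is just slightly smaller than the genuinely critical space $L^2(\mathbb{T}^2)$, which is exactly what is needed to close a borderline estimate that is logarithmically divergent in $L^2$. First I would record the preliminaries: a solution of \eqref{evolut} in $\mathcal C([0,T),L^{2,1})$ is mild, $\vec v_i(t)=e^{t\Delta}\vec u_0+B(\vec v_i,\vec v_i)(t)$ with $B(\vec f,\vec g)(t)=-\int_0^t e^{(t-s)\Delta}\mathbb{P}\,\mathrm{div}(\vec f\otimes\vec g)(s)\,ds$; by parabolic smoothing it is $C^\infty$ on $(0,T)\times\mathbb{T}^2$, and (since $L^{2,1}\subset L^2$ and $\vec v_i(s)\to\vec u_0$ in $L^2$ as $s\to0^+$) it satisfies the energy inequality, so $\vec v_i\in L^2((0,T),\dot H^1)$. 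Then $\vec w=\vec v_1-\vec v_2$ obeys $\vec w=-B(\vec w,\vec v_1)-B(\vec v_2,\vec w)$, with $\vec w(0)=0$ and $\vec w\in\mathcal C([0,T),L^{2,1})\cap L^2((0,T),\dot H^1)$.

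Second, I would work in the critical Kato path space $\mathcal E_T=\{\vec v:\ \|\vec v\|_{\mathcal E_T}:=\sup_{0<t<T}t^{1/4}\|\vec v(t)\|_{L^4(\mathbb{T}^2)}<\infty\}$. One needs two facts. (i) The bilinear estimate $\|B(\vec f,\vec g)\|_{\mathcal E_T}\lesssim\|\vec f\|_{\mathcal E_T}\|\vec g\|_{\mathcal E_T}$, which follows from $\vec f(s)\otimes\vec g(s)\in L^2_x$, the kernel bound $\|e^{\tau\Delta}\mathbb{P}\,\mathrm{div}\|_{L^2_x\to L^4_x}\lesssim\tau^{-3/4}$, and $\int_0^t(t-s)^{-3/4}s^{-1/2}\,ds=c\,t^{-1/4}$. (ii) The \emph{a priori regularization lemma}: $\vec v_1,\vec v_2\in\mathcal E_{T_0}$ with $\|\vec v_i\|_{\mathcal E_{T_0}}\to0$ as $T_0\to0^+$ (morally: a solution merely continuous into $L^{2,1}$ approaches $\vec u_0$ in the stronger Kato sense). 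This is the step where $L^{2,1}$ is genuinely needed: one refines the heat estimates into Lorentz spaces in the time variable — exploiting that an $L^{2,1}$ datum lies in the closure of smooth functions in the relevant scale, producing time weights whose norms are absolutely continuous, something a bare $L^2$ datum fails to provide — and then bootstraps on the Duhamel identity $\vec v_i=e^{t\Delta}\vec u_0+B(\vec v_i,\vec v_i)$; this is also why uniqueness in $\mathcal C([0,T),L^2(\mathbb{T}^2))$ is not known.

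Granting (i)--(ii), on a short interval $[0,T_0]$ the linear map $\vec g\mapsto-B(\vec g,\vec v_1)-B(\vec v_2,\vec g)$ has $\mathcal E_{T_0}$-operator norm $\lesssim\|\vec v_1\|_{\mathcal E_{T_0}}+\|\vec v_2\|_{\mathcal E_{T_0}}<1$; since $\vec w\in\mathcal E_{T_0}$ is a fixed point, $\vec w\equiv0$ there, and by continuity into $L^{2,1}$ also at $t=0$. To cover $[T_0,T)$ I would then use the classical weak--strong argument: there $\vec v_1,\vec v_2$ are smooth with $\vec v_1(T_0)=\vec v_2(T_0)$, and the energy identity for $\vec w$, $\tfrac12\tfrac{d}{dt}\|\vec w\|_{L^2}^2+\|\nabla\vec w\|_{L^2}^2=\int_{\mathbb{T}^2}(\vec w\cdot\nabla\vec w)\cdot\vec v_2\le\tfrac12\|\nabla\vec w\|_{L^2}^2+C\|\nabla\vec v_2\|_{L^2}^2\|\vec w\|_{L^2}^2$, combined with $\vec v_2\in L^2((0,T),\dot H^1)$ and Grönwall, gives $\vec w\equiv0$. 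Hence $\vec v_1=\vec v_2$ on $[0,T)$.

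The main obstacle is the a priori regularization lemma (ii): upgrading the bare continuity $\vec v_i\in\mathcal C([0,T),L^{2,1})$ to the quantitative small-norm membership in the space--time space $\mathcal E_{T_0}$ near $t=0$. This is the two-dimensional counterpart of the delicate step in the $\mathcal C_t L^3(\mathbb R^3)$ uniqueness theorems, with the added difficulty that the maximal-regularity tools available in three dimensions fail here, so one has to replace them by the Lorentz-in-time heat estimates above.
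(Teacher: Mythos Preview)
Your outline has the right overall shape (mild formulation, contraction on a short interval, propagation), but the argument as written has a genuine gap precisely where you flag it: step~(ii), the ``a~priori regularization lemma'' asserting that a solution which is merely in $\mathcal C([0,T),L^{2,1})$ automatically lies in the Kato space $\mathcal E_{T_0}$ with vanishing norm as $T_0\to 0$. You do not prove this, and it is not clear how to. From the bare information $\vec v_i\in L^{2,1}$ one only has $\vec v_i\otimes\vec v_i\in L^1$, and the operator $e^{\tau\Delta}\mathbb P\,\mathrm{div}:L^1\to L^4$ has norm $\sim\tau^{-5/4}$, which is non-integrable; so you cannot feed the Duhamel term back into $\mathcal E_{T_0}$ without already knowing some extra time-weighted regularity. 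The bootstrap you allude to is exactly the delicate part of the $L^3(\mathbb R^3)$ theory, and the paper itself remarks that the maximal-regularity machinery behind those proofs fails in two dimensions. (A secondary issue: your claim that $\vec v_i$ is smooth on $(0,T)\times\mathbb T^2$ and satisfies the energy inequality is not justified a~priori for an arbitrary $\mathcal C_t L^{2,1}$ solution; that would follow \emph{after} uniqueness, not before.)

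The paper sidesteps (ii) entirely by working not in an auxiliary Kato space but directly in the large space $L^{2,\infty}$, which contains $L^{2,1}$ for free. It decomposes $\vec v_j=\vec w_0+\vec w_j$ with $\vec w_0=e^{(t-T^*)\Delta}\vec v_j(T^*,\cdot)$, and estimates the difference $\vec w=\vec v_1-\vec v_2$ in $L^\infty_t L^{2,\infty}_x$. The terms involving $\vec w_0$ are handled via the smoothing $\sqrt{t-T^*}\,\|\vec w_0\|_\infty\to 0$ (this uses only density of $L^\infty$ in $L^{2,1}$). For the terms involving $\vec w_j$ the key device is: split the time integral at level $A$, bound the near piece in $L^1_x$ (using the product $L^{2,1}\times L^{2,\infty}\to L^1$) and the far piece in $L^\infty_x$, then optimize in $A$ via the real-interpolation identity $L^{2,\infty}=[L^1,L^\infty]_{1/2,\infty}$. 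The smallness of the contraction constant comes from $\|\vec w_j(t)\|_{L^{2,1}}\to 0$ as $t\to T^*$, which is immediate from continuity since $\vec w_j(T^*)=0$. No a~priori upgrade of the solutions is required, and the same argument restarts from any $T^*$, so no separate energy/Gr\"onwall step is needed either. If you want to complete your route, you must supply a full proof of (ii); otherwise, the $L^{2,\infty}$ interpolation argument is the efficient way through.
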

 
 \begin{proof} We follow the lines of \cite{FURb} and \cite{MEY}. If $$T^*=\sup\{S\geq 0\ /\ \vec v_1=\vec v_2 \text{ on } [0,S)\}$$ and if $T^*<T$, then $\vec v_1(T^*, .)=\vec v_2(T^*,.)$: it is obvious if $T^*=0$, and is a consequence of continuity if $0<T^*<T$. Moreover, we can write the integral formulation of the Navier--Stokes equations with initial time  $T^*$: for $T^*\leq t<T$ and $j=1, 2$
 \[ \vec v_j(t,.)=e^{(t-T^*)\Delta}\vec v_j(T^*,.) +\int_{T^*}^t\int_{\mathbb{R}^2} K(t-s,.-y) (\vec v_j(s,y)\otimes \vec v_j(s,y)\, dy\, ds\]
 with 
 \[ \vert K(t,x)\vert\leq C \frac 1{(\sqrt t+\vert x\vert)^3}\leq C (\mathds{1}_{\vert x\vert< \pi} \frac 1{t^{3/2}}+\mathds{1}_{\vert x\vert \geq \pi}\frac 1{\vert x\vert^3}).\] Let \[K_{\rm per} (t,x)= \sum_{k\in\mathbb{Z}^2} K(t,x-2\pi k).\] Then $K_{\rm per}(t,.)\in L^1(\mathbb{T}^2)\cap L^\infty (\mathbb{T}^2)$, with  \[ \| K_{\rm per}(t,.)\|_1\leq C \frac 1{\sqrt t} \text{ and  }\|K_{\rm per}(t,.)\|_\infty\leq C (1+\frac 1{t^{3/2}}).\] We write $\vec v_j=\vec w_0+\vec w_j$, with $\vec w_0=e^{(t-T^*)\Delta} \vec v_j(T^*,.)$. By density of $L^\infty(\mathbb{T}^2)$ in $L^{2,1}(\mathbb{T}^2)$, we have 
 \[ \lim_{\delta\rightarrow 0^+} \sup_{T^*<t<T^*+\delta} \sqrt{t-T^*} \|\vec w_0(t,.)\|_\infty=0,\] while, by continuity of $\vec v_j$ and $\vec w_0$, we have, for $j=1,2$,
  \[ \lim_{\delta\rightarrow 0^+}  \sup_{T^*<t<T^*+\delta} \|\vec w_j(t,.)\|_{L^{2,1}(\mathbb{T}^2)} =0.\]
  We now write, for $\vec w=\vec v_1-\vec v_2$ and $T^*\leq t<T$,
  \begin{equation*}\begin{split} \vec w(t,x)=&\int_{T^*}^t\int_{\mathbb{T}^2} K_{\rm per}(t-s,x-y) (\vec w(s,y)\otimes \vec w_0(s,y)) \, dy\, ds\\&+\int_{T^*}^t\int_{\mathbb{T}^2} K_{\rm per}(t-s,x-y) (\vec w(s,y)\otimes \vec  w_1(s,y)) \, dy\, ds
  \\&+\int_{T^*}^t\int_{\mathbb{T}^2} K_{\rm per}(t-s,x-y) (\vec w_0(s,y)\otimes \vec w(s,y)) \, dy\, ds
  \\&+\int_{T^*}^t\int_{\mathbb{T}^2} K_{\rm per}(t-s,x-y) (\vec w_2(s,y)\otimes \vec w(s,y)) \, dy\, ds\\=& \vec z_1(t,x)+\vec z_2(t,x)+\vec z_3(t,x)+\vec z_4(t,x)
  \end{split}\end{equation*} and we estimate $\|\vec w(t,.)\|_{L^{2,\infty}(\mathbb{T}^2)}$. 
  
  We first write, for $\vec w=\vec v_1-\vec v_2$ and $T^*\leq t<T$,
  \begin{equation*}\begin{split}  \|\vec z_1(t,.)\|_{L^{2,\infty}}+ \|\vec z_3(t,.)\|_{L^{2,\infty}}\leq & C \int_{T^*}^t  \|K_{\rm per}(t-s,.)\|_1 \|\vec w_0(s,.)\|_\infty \|\vec w(s,.)\|_{L^{2,\infty}}\, ds \\ \leq  C'\int_{T^*}^t\frac 1{\sqrt{t-s}\sqrt {s-T^*}}\,  ds & \sup_{T^*<s<t} \sqrt{s-T^*}\|\vec w_0(s,.)\|_\infty  \sup_{T^*<s<t} \|\vec w(s,.)\|_{L^{2,\infty}} \\ =  & \pi C'  \sup_{T^*<s<t} \sqrt{s-T^*}\|\vec w_0(s,.)\|_\infty  \sup_{T^*<s<t} \|\vec w(s,.)\|_{L^{2,\infty}}.
    \end{split}\end{equation*} 
    
    For $A>0$, we write
      \begin{equation*}\begin{split} \vec z_2(t,x)+\vec z_4(t,x)=&\int_{\sup(t-A,T^*)}^t\int_{\mathbb{T}^2} K_{\rm per}(t-s,x-y) (\vec w(s,y)\otimes \vec w_1(s,y)) \, dy\, ds\\&+\int_{T^*}^{\sup(t-A,T^*)}\int_{\mathbb{T}^2} K_{\rm per}(t-s,x-y) (\vec w(s,y)\otimes \vec  w_1(s,y)) \, dy\, ds
  \\&+\int_{\sup(t-A,T^*)}^t\int_{\mathbb{T}^2} K_{\rm per}(t-s,x-y) (\vec w_2(s,y)\otimes \vec w(s,y)) \, dy\, ds
  \\&+\int_{T^*}^{\sup(t-A,T^*)}\int_{\mathbb{T}^2} K_{\rm per}(t-s,x-y) (\vec w_2(s,y)\otimes \vec w(s,y)) \, dy\, ds\\=& \vec z_{5,A}(t,x)+\vec z_{6,A}(t,x)+\vec z_{7,A}(t,x)+\vec z_{8,A}(t,x).
  \end{split}\end{equation*} Since the pointwise product is bounded from $L^{2,1}\times L^{2,\infty}$ to $L^1$, we have
   \begin{equation*}\begin{split}  \|\vec z_{5,A}(t,.)\|_1\leq & C \int_{\sup(t-A,T^*)}^t  \|K_{\rm per}(t-s,.)\|_1 \|\vec w_1(s,.)\|_{L^{2,1}} \|\vec w(s,.)\|_{L^{2,\infty}}\, ds \\ \leq & C'\int_{\sup(t-A,T^*)}^t\frac 1{\sqrt{t-s}}\,  ds  \sup_{T^*<s<t} \|\vec w_1(s,.)\|_{L^{2,1}}  \sup_{T^*<s<t} \|\vec w(s,.)\|_{L^{2,\infty}} \\ \leq   & 2 C'  \sqrt A  \sup_{T^*<s<t}  \|\vec w_1(s,.)\|_{L^{2,1}}  \sup_{T^*<s<t} \|\vec w(s,.)\|_{L^{2,\infty}}.
    \end{split}\end{equation*} 
    Similarly
    \[ \|\vec z_{7,A}(t,.)\|_1\leq  C \sqrt A  \sup_{T^*<s<t}  \|\vec w_2(s,.)\|_{L^{2,1}}  \sup_{T^*<s<t} \|\vec w(s,.)\|_{L^{2,\infty}}. \]
    On the other hand, we have (for $T^*\leq t<\min(T,T^*+1)$)
      \begin{equation*}\begin{split}  \|\vec z_{6,A}(t,.)\|_\infty\leq & C \int_{T^*}^{\sup(t-A,T^*)}  \|K_{\rm per}(t-s,.)\|_\infty \|\vec w_1(s,.)\|_{L^{2,1}} \|\vec w(s,.)\|_{L^{2,\infty}}\, ds \\ \leq & C'\int_{T^*}^{\sup(t-A,T^*)}\frac 1{(t-s)^{3/2}}\,  ds  \sup_{T^*<s<t} \|\vec w_1(s,.)\|_{L^{2,1}}  \sup_{T^*<s<t} \|\vec w(s,.)\|_{L^{2,\infty}} \\ \leq   & 2 C'  \frac 1{\sqrt A}  \sup_{T^*<s<t}  \|\vec w_1(s,.)\|_{L^{2,1}}  \sup_{T^*<s<t} \|\vec w(s,.)\|_{L^{2,\infty}}.
    \end{split}\end{equation*} 
    Similarly
    \[ \|\vec z_{8,A}(t,.)\|_\infty\leq  C \frac 1{ \sqrt A}  \sup_{T^*<s<t}  \|\vec w_2(s,.)\|_{L^{2,1}}  \sup_{T^*<s<t} \|\vec w(s,.)\|_{L^{2,\infty}}. \]
 As $L^{2,\infty}=[L^1,L^\infty]_{\frac 1 2,\infty}$, we find that, for $T^*\leq t<\min(T,T^*+1)$,
      \[ \|\vec z_2(t,.)\|_{L^{2,\infty}} \leq   C  \sup_{T^*<s<t}  \|\vec w_1(s,.)\|_{L^{2,1}}   \sup_{T^*<s<t} \|\vec w(s,.)\|_{L^{2,\infty}} \] and       \[ \|\vec z_4(t,.)\|_{L^{2,\infty}}\leq   C  \sup_{T^*<s<t}  \|\vec w_2(s,.)\|_{L^{2,1}}  \sup_{T^*<s<t} \|\vec w(s,.)\|_{L^{2,\infty}}. \]
      
      Putting together  those estimates, we get that, for $0<\delta<\min(1,T-T^*)$,
      \[ \sup_{T^*\leq t\leq T^*+\delta} \|\vec w(t,.)\|_{L^{2,\infty}} \leq C A(\delta) \sup_{T^*\leq t\leq T^*+\delta} \|\vec w(t,.)\|_{L^{2,\infty}} \] with
      \[ A(\delta)=\sup_{T^*<t<T^*+\delta} \sqrt{t-T^*} \|\vec w_0(t,.)\|_\infty+  \|\vec w_1(t,.)\|_{L^{2,1}(\mathbb{T}^2)} +   \|\vec w_2(t,.)\|_{L^{2,1}(\mathbb{T}^2)}. \]
As 
 \[ \lim_{\delta\rightarrow 0^+}A(\delta)=0,\] we get that $\vec w=\vec v_1-\vec v_2$ is equal to $0$ on $[0, T^*+\delta]$ for $\delta$ small enough, in contradiction with the definition of $T^*$. Thus $T^*=T$, and $\vec v_1=\vec v_2$.
 \end{proof}

 \begin{corollary}$\ $\\ If $\vec u$ is a steady solution of (\ref{steady}) with $\vec u\in L^{2,1}(\mathbb{T}^2)$, then $\vec u=0$.
 \end{corollary}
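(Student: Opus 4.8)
The plan is to regard the steady solution as a time-independent solution of the evolution problem (\ref{evolut}), to identify it via the uniqueness Proposition with a \emph{regular} trajectory, and to finish with the rigidity of the case $\vec u\in H^1\cap L^4$.

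First I would reduce to the Cauchy problem. Let $\vec u$ be a steady solution of (\ref{steady}) with $\vec u\in L^{2,1}(\mathbb{T}^2)$. Since $L^{2,1}(\mathbb{T}^2)\subset L^2(\mathbb{T}^2)$, the tensor $\vec u\otimes\vec u$ lies in $L^1(\mathbb{T}^2)$, so $\mathbb{P}(\vec u\cdot\vec\nabla\vec u)=\mathbb{P}\,\mathrm{div}(\vec u\otimes\vec u)$ is a well-defined distribution, equal to $\Delta\vec u$ by the steady equation. The elementary identity $\int_0^t e^{(t-s)\Delta}\Delta\vec u\,ds=e^{t\Delta}\vec u-\vec u$ then shows that the time-independent field $\vec v(t,x):=\vec u(x)$ satisfies $\vec v(t)=e^{t\Delta}\vec u-\int_0^t e^{(t-s)\Delta}\mathbb{P}(\vec v\cdot\vec\nabla\vec v)\,ds$, i.e. it solves (\ref{evolut}) with initial value $\vec u$; being constant in time, it belongs to $\mathcal{C}([0,+\infty),L^{2,1}(\mathbb{T}^2))$.

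Next I would construct a second solution $\tilde{\vec v}$ of (\ref{evolut}) with the same initial value $\vec u$, this one regular. Fixing a subcritical exponent $q\in(2,\infty)$, I would run a Kato-type Picard iteration in the norm $\sup_{0<t<T_0}t^{\frac12-\frac1q}\|\vec f(t)\|_{L^q(\mathbb{T}^2)}$; the bilinear Duhamel term is controlled in this norm with a constant independent of $T_0$ (H\"older in space and a Beta-function integral in time, as in Kato's $L^d$-theory), while the smallness of $\|e^{t\Delta}\vec u\|$ in that norm needed to start the iteration is obtained by splitting $\vec u=\vec a+\vec b$ with $\|\vec a\|_{L^2}$ small and $\vec b\in L^\infty$ and then taking $T_0$ small --- the same use of the density of $L^\infty$ in $L^{2,1}$ as in the proof of the Proposition. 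Because $\vec u\in L^{2,1}$ one then checks, using the strong continuity of the heat semigroup on $L^{2,1}$ and a Lorentz--Young estimate for the bilinear term, that in fact $\tilde{\vec v}\in\mathcal{C}([0,T_0),L^{2,1}(\mathbb{T}^2))$; and since $\tilde{\vec v}(t_0,\cdot)\in L^q(\mathbb{T}^2)$ with $q>2$ for every $t_0\in(0,T_0)$, restarting (\ref{evolut}) at time $t_0$ and invoking the subcritical well-posedness recalled above shows that $\tilde{\vec v}$ is smooth on $(0,T_0)\times\mathbb{T}^2$.

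Finally, $\vec v$ and $\tilde{\vec v}$ are two solutions of (\ref{evolut}) in $\mathcal{C}([0,T_0),L^{2,1}(\mathbb{T}^2))$ with the same initial value, so the Proposition forces $\vec v=\tilde{\vec v}$ on $[0,T_0)$. Hence $\vec u=\vec v(t_0,\cdot)=\tilde{\vec v}(t_0,\cdot)$ is of class $C^\infty$, in particular $\vec u\in H^1(\mathbb{T}^2)\cap L^4(\mathbb{T}^2)$; the computation of the case $\vec u\in H^1\cap L^4$ then gives $\int_{\mathbb{T}^2}|\vec\nabla\otimes\vec u|^2\,dx=0$, so $\vec u$ is constant, and, having mean value zero, $\vec u=0$. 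The routine steps are the reduction and the final energy identity; the real work is producing a regular solution lying precisely in the class $\mathcal{C}([0,T_0),L^{2,1})$ in which uniqueness holds, since one cannot instead invoke uniqueness in $\mathcal{C}([0,T),L^2(\mathbb{T}^2))$ --- which is open --- nor appeal to a Leray--Hopf solution, $\vec v$ being not of that type unless it is trivial.
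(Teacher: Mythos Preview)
Your argument is correct and follows the same strategy as the paper: view the steady solution as a constant-in-time trajectory of the Cauchy problem, construct a second mild solution in $\mathcal{C}([0,T),L^{2,1})$ which is regular for positive times, invoke the uniqueness Proposition, and conclude via the already-treated regular case. The only cosmetic difference is that the paper runs the Kato iteration in the norm $\sup_{0<t<T}\sqrt{t}\,\|\cdot\|_{L^\infty}$ rather than your $\sup_{0<t<T_0} t^{1/2-1/q}\|\cdot\|_{L^q}$, so it lands directly on $\vec u\in L^\infty$ and then appeals to the subcritical $L^p$ case (which in turn reduces to $H^1\cap L^4$), whereas you bootstrap from $L^q$ to smoothness before invoking $H^1\cap L^4$; the content is the same.
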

 \begin{proof} We consider   the Cauchy problem (\ref{evolut}) where the initial  value $\vec u_0$ is equal to our steady solution $\vec u$. We can construct a mild solution $\vec v_1$ on a small time interval $[0,T]$ such that $\vec v_1\in\mathcal{C}([0,T], L^{2,1})$, $\sup_{0<t<T} \sqrt t \|\vec v_1(t,.)\|_\infty<+\infty$ and $\lim_{t\rightarrow 0^+} \sqrt t \|\vec v_1(t,.)\|_\infty=0$. We have another solution in $\mathcal{C}([0,T], L^{2,1})$, namely $\vec v_2(t,.)=\vec u$. By uniqueness, we find that $\vec u=\vec v_1(\frac T 2,.)\in L^\infty$, and thus $\vec u=0$.
 \end{proof}

\section{Admissible vector fields.}

In this section, we describe some examples of admissible divergence free periodical distribution vector fields 
\begin{equation}\vec u=\sum_{k\in\mathbb{Z}^2_+} \rho_k \cos(k\cdot x+\theta_k) k^\perp=\sum_{k\in\mathbb{Z}^2_+} \vec u_k.\end{equation} 
\subsection*{Square integrable vector fields}
The most obvious example is the case $\vec u\in L^2$, i.e. $\sum_{k\in\mathbb{Z}^2_+} \|\vec u_k\|_2^2<+\infty$. We have $\vec u_{k_1}\otimes \vec u_{k_2}\in L^1$ while the frequencies appearing in $\vec u_{k_1}\otimes \vec u_{k_2}$ are $k_1+k_2$ and $k_1-k_2$ (if $k_1\neq k_2$, since $\vec u_k\cdot\vec\nabla \vec u_k=0$). Thus,
$$ \|\mathbb{P}(\vec u_{k_1}\cdot \vec\nabla \vec u_{k_2})\|_{H^{-N}}\leq  C_N \|\vec u_{k_1}\|_2  \|\vec u_{k_2}\|_2 (\vert k_1+k_2\vert^{-N-1} +\vert k_1-k_2\vert^{-N-1} ).$$ If $N>1$, we have $$\sum_{j\in\mathbb{Z}^2\setminus \{(0,0)\} } \vert j\vert^{-N-1}<+\infty,$$ hence 
$$ \sum_{j\in\mathbb{Z}^2\setminus \{(0,0)\} }  \vert j\vert^{-N-1} \|\vec u_{k_2+j}\|_2\in l^2$$ and thus
$$\sum_{k_1\in\mathbb{Z}^2_+} \sum_{k_2\in\mathbb{Z}^2_+}  \|\mathbb{P}(\vec u_{k_1}\cdot \vec\nabla \vec u_{k_2})\|_{H^{-N}}<+\infty.$$
\subsection*{Lacunary Fourier series}
Let us consider a lacunary Fourier series
$$ \vec u=\sum_{j=0}^{+\infty}  \rho_{k_j} \cos(k_j\cdot x+\theta_{k_j}) k_j^\perp=\sum_{j=0}^{+\infty} \vec u_{k_j}$$ with $$\vert k_{j+1}\vert > 8 \vert k_j\vert$$ and $$\sum_{j=0}^{+\infty} \rho_{k_j}^2 \vert k_j\vert^{-2N}<+\infty$$ where $
N\geq 0$ (so that $\vec u\in H^{-N-1}$). We have. 
 \begin{equation*}\begin{split} \|\mathbb{P}(\vec u_{k_j}\cdot \vec\nabla \vec u_{k_p})\|_{H^{-2N-3}} \leq& C \rho_{k_j}\rho_{k_p} \vert k_j\vert \vert k_p\vert (\vert k_j\vert+\vert k_p\vert)^{-2N-2}\\\leq& C' \vert k_j\vert^{-N} \rho^{k_j}\vert k_p\vert^{-N} \rho^{k_p} \left(\frac {\min(\vert k_j\vert, \vert k_p\vert)}{ \max(\vert k_j\vert, \vert k_p\vert)}\right)^{N+1}.\end{split}\end{equation*} Noticing that
 $$ \sum_{j=1}^{+\infty} \sum_{0\leq p\leq j-1} \rho_{k_j}\rho_{k_p} \frac{\vert k_j\vert \vert k_p\vert }{(\vert k_j\vert+\vert k_p\vert)^{2N+2}}\leq  C  \sum_{j=1}^{+\infty} \sum_{0\leq p\leq j-1} \frac{ \rho_{k_j}}{\vert k_j\vert^{N}}\frac{\rho_{k_p}}{\vert k_p\vert^{N}} 8^{-(j-p)(N+1)} ,$$ we find that 
 $$\sum_{p=0}^{+\infty} \sum_{j=0}^{+\infty} \| \mathbb{P}(\vec u_{k_p}\cdot\vec\nabla \vec u_{k_j})\|_{H^{-2N-3}}<+\infty.$$\\
 
 \noindent\textbf{Remark: } We could have proved that $\mathbb{P}(\vec u \cdot\vec \nabla\vec u)\in H^{-2N-3}$ in another way: we have $\vec u\in H^{-N-1}\cap B^{-N-1}_{\infty,\infty}$; using paradifferential calculus and decomposing the product $\vec u \cdot\vec \nabla\vec u$ in two paraproducts and a remainder, we see that the paraproducts are controlled in $H^{-2N-3}$ by $\|\vec u\|_{H^{-N-1}} \|\vec u\|_{B^{-N-1}_{\infty,\infty}}$, while the remainder is equal to $0$.
\subsection*{Lacunary resonant Fourier modes}
$$ \vec u=\sum_{j=0}^{+\infty}  \rho_{k_j} (\cos(k_j\cdot x+\theta_{k_j}) k_j^\perp+\cos((k_j+\omega_j)\cdot x+\eta_{k_j}) (k_j+\omega_j)^\perp)=\sum_{j=0}^{+\infty} \vec u_{k_j}$$ with $$\vert k_{j+1}\vert > 8 \vert k_j\vert,\  \vert k_j\vert> 8 \vert \omega_j\vert,\  \omega_j\cdot k_j=0$$ and $$\sum_{j=0}^{+\infty} \rho_{k_j}^2 \frac{\vert k_j\vert}{\vert\omega_j\vert}<+\infty.$$ 
We write $$\vec v_{k_j}=  \rho_{k_j} \cos(k_j\cdot x+\theta_{k_j}) k_j^\perp \text{ and }\vec w_{k_j}= \rho_{k_j} \cos((k_j+\omega_j)\cdot x+\eta_{k_j}) (k_j+\omega_j)^\perp.$$

In particular, we have that $\sum_{j=0}^{+\infty} \rho_{k_j}^2 <+\infty$, so that $\vec u\in H^{-1}\cap B^{-1}_{\infty,\infty}$. Following the computations of the case of lacunary solutions, we find that 
 $$\sum_{p=0}^{+\infty} \sum_{0\leq j, j\neq p} \| \mathbb{P}(\vec v_{k_p}\cdot\vec\nabla \vec v_{k_j})\|_{H^{-3}}<+\infty, \sum_{p=0}^{+\infty} \sum_{0\leq j, j\neq p} \| \mathbb{P}(\vec v_{k_p}\cdot\vec\nabla \vec w_{k_j})\|_{H^{-3}}<+\infty,$$  $$\sum_{p=0}^{+\infty} \sum_{0\leq j, j\neq p} \| \mathbb{P}(\vec w_{k_p}\cdot\vec\nabla \vec v_{k_j})\|_{H^{-3}}<+\infty, \sum_{p=0}^{+\infty} \sum_{0\leq j, j\neq p} \| \mathbb{P}(\vec w_{k_p}\cdot\vec\nabla \vec w_{k_j})\|_{H^{-3}}<+\infty.$$

 We now estimate the diagonal terms $ \mathbb{P}(\vec u_{k_j}\cdot\vec\nabla \vec u_{k_j}).$ We have 
$$\vec v_{k_j}\cdot\vec\nabla \vec v_{k_j}=\vec w_{k_j}\cdot\vec\nabla \vec w_{k_j}=0,$$ while
\begin{equation*}\begin{split} \vec v_{k_j}\cdot \vec \nabla \vec w_{k_j}=&\rho_{k_j}^2 (k_j^\perp\cdot \omega_j) \cos(k_j\cdot x+\theta_{k_j})  \cos((k_j+\omega_j)\cdot x+\eta_{k_j}+\frac\pi 2) (k_j+\omega_j)^\perp\\=&\frac 1 2 \rho_{k_j}^2 (k_j^\perp\cdot \omega_j) \cos((2k_j+\omega_j)\cdot x+\theta_{k_j}+\eta_{k_j}+\frac \pi 2)  (k_j+\omega_j)^\perp \\&+\frac 1 2 \rho_{k_j}^2 (k_j^\perp\cdot \omega_j) \cos(\omega_j\cdot x-\theta_{k_j}+\eta_{k_j}-\frac \pi 2)  (k_j+\omega_j)^\perp
\end{split}\end{equation*} and \begin{equation*}\begin{split} \vec w_{k_j}\cdot \vec \nabla \vec v_{k_j}=&\rho_{k_j}^2 (k_j\cdot \omega_j^\perp) \cos(k_j\cdot x+\theta_{k_j}+\frac \pi 2)  \cos((k_j+\omega_j)\cdot x+\eta_{k_j}) k_j^\perp\\=&\frac 1 2 \rho_{k_j}^2 (k_j\cdot \omega_j^\perp) \cos((2k_j+\omega_j)\cdot x+\theta_{k_j}+\eta_{k_j}+\frac \pi 2)  k_j^\perp \\&+\frac 1 2 \rho_{k_j}^2 (k_j\cdot \omega_j^\perp) \cos(\omega_j\cdot x-\theta_{k_j}+\eta_{k_j}-\frac \pi 2)  k_j^\perp.
\end{split}\end{equation*}
We have
$$\| \mathbb{P}( \rho_{k_j}^2 (k_j^\perp\cdot \omega_j) \cos((2k_j+\omega_j)\cdot x+\theta_{k_j}+\eta_{k_j}+\frac \pi 2)  (k_j+\omega_j)^\perp)\|_{H^{-3} } \leq C \rho_{k_j}^2 \frac{\vert \omega_j\vert}{\vert k_j\vert}$$
and
$$\| \mathbb{P}( \rho_{k_j}^2 (k_j\cdot \omega_j^\perp) \cos((2k_j+\omega_j)\cdot x+\theta_{k_j}+\eta_{k_j}-\frac \pi 2)  k_j^\perp)\|_{H^{-3} } \leq C \rho_{k_j}^2 \frac{\vert \omega_j\vert}{\vert k_j\vert}.$$
On the other hand, we have 
$$ \mathbb{P}( \rho_{k_j}^2 (k_j\cdot \omega_j^\perp) \cos(\omega_j\cdot x-\theta_{k_j}+\eta_{k_j}+\frac \pi 2)  k_j^\perp)=0$$
and
$$ \mathbb{P}( \rho_{k_j}^2 (k_j^\perp\cdot \omega_j) \cos(\omega_j\cdot x-\theta_{k_j}+\eta_{k_j}-\frac \pi 2)  (\omega_j+k_j)^\perp)=  \rho_{k_j}^2 (k_j^\perp\cdot \omega_j) \cos(\omega_j\cdot x-\theta_{k_j}+\eta_{k_j}-\frac \pi 2)  \omega_j^\perp$$
so that
$$ \|\mathbb{P}( \rho_{k_j}^2 (k_j^\perp\cdot \omega_j) \cos(\omega_j\cdot x-\theta_{k_j}+\eta_{k_j}-\frac \pi 2)  (\omega_j+k_j)^\perp)\|_{H^{-3}}\leq C \rho_{k_j}^2 \frac{\vert k_j\vert}{\vert \omega_j\vert}.$$
Thus, we get
 $$\sum_{j=0}^{+\infty}   \| \mathbb{P}(\vec v_{k_j}\cdot\vec\nabla \vec w_{k_j})\|_{H^{-3}}<+\infty, \sum_{j=0}^{+\infty}  \| \mathbb{P}(\vec w_{k_j}\cdot\vec\nabla \vec v_{k_j})\|_{H^{-3}}<+\infty,$$   and $\mathbb{P}(\vec u\cdot \vec\nabla \vec u)$  is well defined in $H^{-3}$.
\section{2D steady solutions.}
We are going to prove Theorem \ref{main} following the lines of \cite{BUC} and \cite{LUO}, i.e. applying the convex integration scheme by using intermittencies in the Fourier spectrum of the solution.
 In our case, however, computations will be much more simple than in the ones in \cite{BUC} and \cite{LUO}, as we don't bother on convergence in $L^2$.

We shall look for  a solution $$\vec u=\sum_{j=0}^{+\infty} \vec u_{j}=\vec u_0+\sum_{j=1}^{+\infty} \vec v_{j}+\vec w_j$$ 
where
\begin{itemize}
\item[$\bullet$] $\vec u_0=\rho_0 \cos(k_0\cdot x) k_0^\perp$ with $0<\rho_0<1$ and $k_0\in \mathbb{Z}^2\setminus\{(0,0)\}$,
\item[$\bullet$] $\vec v_{j}=  \rho_{j} \cos(k_j\cdot x) k_j^\perp ${ and } $\vec w_{j}= \rho_{j} \cos((k_j+\omega_j)\cdot x+\eta_{j}) (k_j+\omega_j)^\perp$ with $\rho_j>0$ and $k_j$, $\omega_j\in \mathbb{Z}^2\setminus\{(0,0)\}$,
\item[$\bullet$] for $j\geq 1$, $\vert k_{j}\vert > 8 \vert k_{j-1}\vert$,  $\vert k_j\vert> 8 \vert \omega_j\vert$, $\omega_j\cdot k_j=0$.
\end{itemize}
$k_j$, $\omega_j$ and $\eta_j$ will be constructed by induction and we'll check that
$$\sum_{j=1}^{+\infty} \rho_{j}^2 \frac{\vert k_j\vert}{\vert\omega_j\vert}<+\infty,$$ so that
$\vec u$ is an admissible vector field such that $\vec u\in H^{-1}$ (hence $\Delta\vec u\in H^{-3}$) and $\mathbb{P}(\vec u\cdot \vec\nabla\vec u)\in H^{-3}$.

Defining $\vec U_n=\sum_{j=0}^n \vec u_j$, we have the convergence of $\Delta\vec U_n-\mathbb{P}(\vec U_n\cdot\vec\nabla \vec U_n)$ to $\Delta\vec u-\mathbb{P}(\vec u\cdot\vec\nabla\vec u)$ in $H^{-3}$. We write, for $n\geq 1$, $$\Delta\vec U_n-\mathbb{P}(\vec U_n\cdot\vec\nabla \vec U_n)=\vec V_0+\sum_{j=1}^n \vec V_j+\vec W_j$$  
\begin{itemize}
\item[$\bullet$] $\vec V_0=\Delta \vec u_0(=\Delta \vec u_0-\mathbb{P}(\vec u_0\cdot\vec\nabla\vec u_0))=-\rho_0 \vert k_0\vert^2 \cos(k_0\cdot x) k_0^\perp$
\item[$\bullet$] for $n\geq 1$, \begin{equation*}\begin{split}\vec V_{n}=  \Delta \vec U_n&-\mathbb{P}(\vec u_n\cdot\vec \nabla \vec U_{n-1}) -\mathbb{P}(\vec U_{n-1}\cdot\vec \nabla \vec u_n) 
\\&-\frac 1 2 (k_n^\perp\cdot\omega_n) \rho_n^2\mathbb{P}( \cos((2k_n+\omega_n)\cdot x+\eta_n+\frac\pi 2) (k_n+\omega_n)^\perp)
\\&-\frac 1 2 (k_n\cdot\omega_n^\perp) \rho_n^2\mathbb{P}( \cos((2k_n+\omega_n)\cdot x+\eta_n-\frac\pi 2) k_n^\perp)
 \end{split}\end{equation*}
\item[$\bullet$]  for $n\geq 1$, $\vec W_{n}= -\frac 1 2 \rho_{n}^2 (k_n^\perp\cdot \omega_n) \cos(\omega_n\cdot x+\eta_{n}-\frac \pi 2)  \omega_n^\perp$  
\end{itemize}

Let us write $A_n$ for the set of frequencies involved in the expansion of $\vec V_n$:
$$ \vec V_n=\sum_{k\in A_n}   \cos(k\cdot x+\alpha_{n,k}) \vec v_{n,k}=\sum_{k\in A_n} \lambda_{n,k} \cos(k\cdot x+\alpha_{n,k}) k^\perp,$$ with $\lambda_{n,k}= \frac{ \vec v_{n,k}\cdot k^\perp} {\vert k\vert^2}$.
Using the formula
 \begin{equation*}\begin{split} \mathbb{P} (\cos(\alpha.\cdot x+\theta)&\alpha^\perp\cdot \vec\nabla (\cos(\beta\cdot x+\eta)\beta^\perp)+\cos(\beta\cdot x+\eta)\beta^\perp\cdot \vec\nabla( \cos(\alpha.\cdot x+\theta)\alpha^\perp))\\ =& -  \mathbb{P}  ((\alpha^\perp\cdot \beta) \cos(\alpha.\cdot x+\theta)\sin(\beta\cdot x+\eta) \beta^\perp)
 \\ &-  \mathbb{P} ((\beta^\perp\cdot \alpha) \sin(\alpha.\cdot x+\theta)\cos(\beta\cdot x+\eta)\alpha^\perp)
\\=&- \frac 1 2\mathbb{P}((\cos((\alpha+\beta) \cdot x+\theta+\eta-\frac \pi 2) ((\alpha^\perp\cdot \beta)\beta^\perp +(\beta^\perp\cdot\alpha)\alpha^\perp))
\\ &- \frac 1 2\mathbb{P}((\cos((\alpha-\beta) \cdot x+\theta-\eta-\frac \pi 2) (-(\alpha^\perp\cdot \beta)\beta^\perp +(\beta^\perp\cdot\alpha)\alpha^\perp))
\\=&-\frac 1 2 \cos((\alpha+\beta) \cdot x+\theta+\eta+\frac \pi 2)  (\beta^\perp\cdot\alpha) \frac{\vert \beta\vert^2-\vert\vec \alpha\vert^2} {\vert \alpha+\beta\vert^2}(\alpha+\beta)^\perp
\\ &- \frac 1 2 \cos((\alpha-\beta) \cdot x+\theta-\eta+\frac \pi 2)  (\alpha^\perp\cdot \beta)  \frac{\vert \beta\vert^2-\vert\vec \alpha\vert^2} {\vert \alpha+\beta\vert^2} (\alpha-\beta)^\perp\end{split}\end{equation*}
  we see that we have more precisely $8n-1$ frequencies in $A_n$ for $n\geq 1$:
  \begin{itemize}
  \item[$\bullet$] $k=k_n$ with $\lambda_{n,k}= -    \rho_n \vert k_n\vert^2 $ and $\eta_{n,k}=0$
  \item[$\bullet$] $k=k_n+\omega_n$ with $\lambda_{n,k}= -    \rho_n \vert k_n+\omega_n\vert^2 $ and $\eta_{n,k}=\eta_n$
  \item[$\bullet$] $k= 2k_n+\omega_n$ with $\lambda_{n,k}= -\frac 1 2 \frac{(k_n^\perp\cdot\omega_n)^2}{\vert 2 k_n+\omega_n\vert^2} \rho_n^2  $ and $\eta_{n,k}=\eta_n-\frac\pi 2 $
  \item[$\bullet$] for $j=0,\dots, n-1$, $k=k_n+k_j $ with $\lambda_{n,k}=   \frac 1 2 \rho_n\rho_j  (k_j^\perp\cdot k_n) \frac{\vert k_j\vert^2-\vert  k_n\vert^2} {\vert k_n+k_j\vert^2} $ and $\eta_{n,k}=\frac\pi 2 $
  \item[$\bullet$] for $j=0,\dots, n-1$, $k= k_n-k_j$ with $\lambda_{n,k}= \frac 1 2 \rho_n\rho_j    (k_n^\perp\cdot k_j)  \frac{\vert k_j\vert^2-\vert\vec k_n\vert^2} {\vert k_n-k_j\vert^2} $ and $\eta_{n,k}=\frac\pi 2 $
  \item[$\bullet$] for $j=1,\dots, n-1$, $k=k_n+k_j +\omega_j$ with $\eta_{n,k}=\eta_j+\frac\pi 2 $ and \\ $\lambda_{n,k}=   \frac 1 2 \rho_n\rho_j  ((k_j+\omega_j)^\perp\cdot k_n) \frac{\vert k_j+\omega_j\vert^2-\vert  k_n\vert^2} {\vert k_n+k_j+\omega_j\vert^2} $ 
  \item[$\bullet$] for $j=1,\dots, n-1$, $k=k_n-k_j -\omega_j$ with $\eta_{n,k}=-\eta_j+\frac\pi 2 $ and \\ $\lambda_{n,k}=   \frac 1 2 \rho_n\rho_j    (k_n^\perp\cdot( k_j+\omega_j))  \frac{\vert k_j+\omega_j\vert^2-\vert\vec k_n\vert^2} {\vert k_n-k_j-\omega_j\vert^2} $ 
   \item[$\bullet$] for $j=0,\dots, n-1$, $k=k_n+\omega_n+k_j $ with $\eta_{n,k}=\eta_n+\frac\pi 2 $ and  $\lambda_{n,k}=   \frac 1 2 \rho_n\rho_j  (k_j^\perp\cdot (k_n+\omega_n)) \frac{\vert k_j\vert^2-\vert  k_n+\omega_n\vert^2} {\vert k_n+\omega_n+k_j\vert^2} $  
  \item[$\bullet$] for $j=0,\dots, n-1$, $k= k_n+\omega_n-k_j$ with  $\eta_{n,k}=\eta_n+\frac\pi 2 $ and  $\lambda_{n,k}= \frac 1 2 \rho_n\rho_j    ((k_n+\omega_n)^\perp\cdot k_j)  \frac{\vert k_j\vert^2-\vert\vec k_n+\omega_n\vert^2} {\vert k_n+\omega_n-k_j\vert^2} $ 
  \item[$\bullet$] for $j=1,\dots, n-1$, $k=k_n+\omega_n+k_j +\omega_j$ with $\eta_{n,k}=\eta_n+\eta_j+\frac\pi 2 $ and \\ $\lambda_{n,k}=   \frac 1 2 \rho_n\rho_j  ((k_j+\omega_j)^\perp\cdot (k_n+\omega_n)) \frac{\vert k_j+\omega_j\vert^2-\vert  k_n+\omega_n\vert^2} {\vert k_n+\omega_n+k_j+\omega_j\vert^2} $ 
  \item[$\bullet$] for $j=1,\dots, n-1$, $k=k_n+\omega_n-k_j -\omega_j$ with $\eta_{n,k}=\eta_n+-\eta_j+\frac\pi 2 $ and  $\lambda_{n,k}=   \frac 1 2 \rho_n\rho_j    ((k_n+\omega_n)^\perp\cdot( k_j+\omega_j))  \frac{\vert k_j+\omega_j\vert^2-\vert\vec k_n+\omega_n\vert^2} {\vert k_n+\omega_n-k_j-\omega_j\vert^2} $ 
\end{itemize}  
   For $k\in A_n$, we find that $\frac 5 8\vert k_n\vert\leq \vert k\vert \leq \frac {11}8\vert k_n\vert $, with $$\frac {11}8\vert k_n\vert \leq \frac {11} {64}\vert k_{n+1}\vert \leq \frac 3 8 (\frac 5 8\vert k_{n+1}\vert)$$
   and the frequencies occuring in $A_{n+1}$ are greater than those occuring in $A_n$.
   
   We then write $$A_0=\{\gamma_1\}, A_1=\{\gamma_2,\dots,\gamma_8\},\dots , A_n=\{\gamma_{4n^2-5n+3},\dots,\gamma_{4n^2+3n+1}\}, \dots$$
   We write, for $j\geq 0$,  
   \[ \vec V_j=\sum_{\gamma_p\in A_j} \lambda_p \cos(\gamma_p\cdot x+\alpha_p) \gamma_p^\perp=\sum_{\gamma_p\in A_j} \vert \lambda_p \vert \cos(\gamma_p\cdot x+\alpha_p+\epsilon_p\pi) \gamma_p^\perp\] with $\epsilon_p\in\{0,1\}$. Thus we have
   \begin{equation*}\begin{split} \Delta\vec U_n-\mathbb{P}(\vec U_n\cdot\vec\nabla \vec U_n)=& \sum_{j=0}^n \sum_{\gamma_p\in A_j} \vert \lambda_p\vert \cos(\gamma_p\cdot x+\alpha_p+\epsilon_p\pi) \gamma_p^\perp
  \\&-\sum_{j=1}^n \frac 1 2 \rho_{j}^2 (k_j^\perp\cdot \omega_j) \cos(\omega_j\cdot x+\eta_{j}-\frac \pi 2)  \omega_j^\perp.\end{split}
   \end{equation*}
   We know the values of $\rho_0$, $k_0$, hence of $\gamma_1=k_0$,  $\vert\lambda_1\vert  =\rho_0 \vert k_0\vert^2$ and  $\alpha_0+\epsilon_0\pi=\pi$.
 We shall define  by induction   $\omega_n$ , $k_n$, $\rho_n$ and $\eta_n$ for $n\geq 1$: we remark that $\gamma_n\in A_{j(n)}$ for some $j(n)<n$ (as $n< 4n^2-5n+3$). Thus, if we already know $\omega_j$ , $k_j$, $\rho_j$ and $\eta_j$ for $0\leq j\leq n-1$, we already know $\gamma_n$, $\vert \lambda_n\vert$ and $\alpha_n+\epsilon_n\pi$. The main idea is then to require that
 \[  \vert \lambda_n\vert \cos(\gamma_n\cdot x+\alpha_n+\epsilon_n\pi) \gamma_n^\perp= \frac 1 2 \rho_{n}^2 (k_n^\perp\cdot \omega_n) \cos(\omega_n\cdot x+\eta_{n}-\frac \pi 2)  \omega_n^\perp.\]
We thus make the following choices:   \begin{itemize}
   \item[$\bullet$] We take $\omega_n=\gamma_n$.
 \item[$\bullet$]   We take $ k_n= N_n \omega_n^\perp$, where the integer $N_n$ will fulfill some requirements. Our first requirement  will be that  $N_n\in \mathbb{N}$ is  large enough to grant that $N_n>8$ and $\vert k_n\vert> 8\vert k_{n-1}\vert$.
  \item[$\bullet$]   We then have
    \[    \frac 1 2 \rho_{n}^2 (k_n^\perp\cdot \omega_n) \cos(\omega_n\cdot x+\eta_{n}-\frac \pi 2)  \omega_n^\perp.= \frac 1 2 \rho_{n}^2  N_n \vert\omega_n\vert^2 \cos(\omega_n\cdot x+\eta_{n}+\frac \pi 2)  \omega_n^\perp. \] Thus, we take
    \[  \rho_n=\sqrt{\frac{2\vert\lambda_n\vert}{N_n \vert\omega_n\vert^2}}\text{ and } \eta_n=\alpha_n+\epsilon_n\pi-\frac\pi 2.\]
    \item[$\bullet$]  We shall add another requirement on $N_n$ in order to grant that \[\sum_{j=1}^{+\infty} \rho_{j}^2 \frac{\vert k_j\vert}{\vert\omega_j\vert}<+\infty,\]   Recall that $0<\rho_0<1$. Take $N_0=1$. We first check by induction that $\rho_n\leq  \rho_0 N_n^{-\frac 1 4} (\leq 1)$. Indeed, there is a constant $C_0$ such that $ \vert \omega_n\vert \geq \frac1{C_0} \vert k_{j(n)}\vert$ and 
 $$ \vert \lambda_n\vert\leq C_0 \vert k_{j(n)}\vert^2 \rho_{j(n)} \sup(1, \rho_0,\dots,\rho_{j(n)})$$ so that, by induction, $\vert \lambda_n\vert  \leq  C_0 \vert k_{j(n)}\vert^2 \frac{\rho_0}{N_{j(n)}^{1/4}}$ and $\rho_n\leq \sqrt{\frac{2C_0^3}{N_n}}\leq \rho_0 N_n^{-1/4}$ (if we take $N_n\geq 4 C_0^6 \rho_0^{-4}$).
 We have
 $$\rho_n^2\frac{\vert k_n\vert}{\vert\omega_n\vert}=N_n \rho_n^2=\frac{2\vert \lambda_n\vert}{\vert\omega_n\vert^2}\leq 2  C_0^3\rho_0 N_{j(n)}^{-1/4} $$  Thus,
  $$\sum_{j=1}^{+\infty} \rho_{j}^2 \frac{\vert k_j\vert}{\vert\omega_j\vert}\leq 2  C_0^3\rho_0\sum_{n=0}^{+\infty} \sum_{j\in A_n}  N_n^{-1/4}\leq  2  C_0^3\rho_0(1+\sum_{n=1}^{+\infty} (8n-1) N_n^{-1/4}). $$ Hence, our last requirement on $N_n$ will be that $N_n\geq (8n-1)^{12}$. 
   \end{itemize} 
 
 Theorem \ref{main} is proved.

\section{A remark on the Koch--Tataru theorem.}
In our construction, we have $\vec u\in   H^{-1}\cap BMO^{-1}$ with $$ \|\vec u\|_{H^{-1}} +\|\vec u\|_{BMO^{-1}}< C \rho_0.$$ Moreover, 
$$ \|\vec u-\vec U_n\|_{BMO^{-1}}\leq  C \rho_0 j(n)^{-3}\rightarrow_{n\rightarrow +\infty}  0.$$  By the Koch--Tataru theorem \cite{KOC}, for $\rho_0$ small enough, the evolutionary problem
\begin{equation}\label{evol}\left\{ \begin{split} &\partial_t\vec v=\Delta \vec v-\mathbb{P}(\vec v\cdot\nabla\vec v)\\ & {\rm div }\,\vec v=0\\ & \vec v(0,x)=\vec u(x)\end{split}\right.\end{equation} will have a smooth solution on $(0,+\infty)\times \mathbb{T}^2$ such that
\begin{itemize}
\item[$\bullet$] $\sup_{t>0} \sqrt t \|\vec v(t,.)\|_\infty<+\infty$
\item[$\bullet$]  $\sup_{t>0}   t \|\vec\nabla\otimes \vec  v(t,.)\|_\infty<+\infty$
\item[$\bullet$]  $\vec v\in \mathcal{C}([0,+\infty), BMO^{-1})$.
\end{itemize}
The steady solution $\vec u$ is another solution of the evolutionary problem (\ref{evol}), with $\vec u\in \mathcal{C}([0,+\infty), BMO^{-1})$. Of course, $\vec v\neq \vec u$ as $\lim_{t\rightarrow +\infty} \|\vec v(t,.)\|_{H^{-1}}=0$.
 
\maketitle

\end{document}